\newtheorem{theorem}{Theorem}
\newtheorem{proposition}[theorem]{Proposition}
\newtheorem{corollary}[theorem]{Corollary}
\newtheorem{conjecture}[theorem]{Conjecture}
\newcommand{\calc}{{\cal C}}
\newcommand{\cald}{{\cal D}}
\newcommand{\dd}{\displaystyle }
\newcommand{\ov}{\overline}
\def\0{\leqno}
\title{\bf On a generalization\\ of the Gauss's formula}
\author{Marius T\u arn\u auceanu}
\date{February 19, 2016}
\begin{document}

\maketitle

\begin{abstract}
In this paper we study a group theoretical generalization of the
well-known Gauss's formula that uses the generalized Euler's
totient function introduced in \cite{11}.
\end{abstract}

\noindent{\bf MSC (2010):} Primary 20D60, 11A25; Secondary 20D99,
11A99.

\noindent{\bf Key words:} Gauss's formula, Euler's totient
function, finite group, order of an element, exponent of a group.

\section{Introduction}

The {\it Euler's totient function} (or, simply, the {\it totient
function}) $\varphi$ is one of the most famous functions in number
theory. Notice that the totient $\varphi(n)$ of a positive integer
$n$ is defined to be the number of positive integers less than or
equal to $n$ that are coprime to $n$. The totient function is
important mainly because it gives the order of the group of all
units in the ring ($\mathbb{Z}_n$, +, $\cdot$). Alternatively,
$\varphi(n)$ can be seen as the number of generators or as the
number of elements of order $n$ of the finite cyclic group
($\mathbb{Z}_n$, +).

Recall also a well-known arithmetical identity involving the
totient function, namely the {\it Gauss's formula}
$$\dd\sum_{d\mid n}\varphi(d)=n, \hspace{1mm}\forall\hspace{1mm} n\in\mathbb{N}^*.\0(1)$$

Many generalizations of the totient function are known (for
example, see \cite{2,3,5,8} and the special chapter on this topic
in \cite{6}). From these, the most significant is probably the
{\it Jordan's totient function} (see \cite{1}).

The starting point for our discussion is given by the paper
\cite{11}, where a new group theoretical generalization of the
totient function has been studied. This is founded on the remark
that $\varphi(n)$ counts in fact the number of elements of order
$\exp(\mathbb{Z}_n)$ in ($\mathbb{Z}_n$, +). Consequently, it
makes sense to define
$$\varphi(G)=|\{a\in G \mid o(a)=\exp(G)\}|$$for any finite group $G$. It is
obvious that $\varphi(\mathbb{Z}_n)=\varphi(n)$, for all
$n\in\mathbb{N}^*$, and so a generalization of the classical
totient function $\varphi$ is obtained. We observe that for
$G\cong\mathbb{Z}_n$ the Gauss's formula can be rewritten as
$$\dd\sum_{H\leq\, G}\varphi(H)=|G|\,.\0(2)$$This leads
to the natural problem $$\textit{which are the finite groups $G$
satisfying the equality}\hspace{1mm} {\rm (2)\,?}$$Its study is
the main goal of the current paper. We show that the cyclic groups
are the unique abelian groups with this property. Inspired by some
particular cases, we conjecture that this is also true for
nilpotent groups. Moreover, we give examples of non-nilpotent
groups $G$ satisfying (2). Several open problems on this topic are
also formulated.

Most of our notation is standard and will not be repeated here.
Basic definitions and results on groups can be found in
\cite{4,9}. For subgroup lattice concepts we refer the reader to
\cite{7,10}.

\section{Main results}

For a finite group $G$ let us denote
\[ S(G)=\dd\sum_{H\leq\, G}\varphi(H)\,. \]
In this way, we are interested to describe the class $\calc$
consisting of all finite groups $G$ for which $S(G)=|G|$.

Obviously, the finite cyclic groups are contained in $\calc$, by
the Gauss's formula. On the other hand, we easily obtain
$S(\mathbb{Z}_{2}\times\mathbb{Z}_{2})=7\neq
4=|\mathbb{Z}_{2}\times\mathbb{Z}_{2}|$, proving that $\calc$ is
not closed under direct products or extensions.

For a detailed study of the class $\calc$, we must look first at
some basic properties of the map $S$. We remark that it satisfies
the inequality
$$S(G)\geq\dd\sum_{H\in C(G)}\varphi(H)=\dd\sum_{H\in C(G)}\varphi(|H|),\0(3)$$where
$C(G)$ denotes the poset of cyclic subgroups of $G$. Another easy
but very important property of $S$ is the following.

\begin{proposition}\label{th:C1}
    $S$ is multiplicative, that is
    if $(G_i)_{i=\ov{1,k}}$ is a family of finite groups of coprime
    orders, then we have:
    \[ S(\prod_{i=1}^k G_i)=\prod_{i=1}^k S(G_i). \]
\end{proposition}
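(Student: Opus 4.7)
The plan is to reduce immediately to the case $k=2$ by an obvious induction on $k$, and then to prove the statement for $G=G_1\times G_2$ with $\gcd(|G_1|,|G_2|)=1$ by showing that both the subgroup lattice structure and the invariant $\varphi$ factor across the direct product.

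First I would invoke the standard coprime-order lemma: if $|G_1|$ and $|G_2|$ are coprime, then every subgroup $H\leq G_1\times G_2$ has the form $H=H_1\times H_2$ with $H_i\leq G_i$. The quick argument is that the two projections $\pi_i(H)$ have orders dividing $|G_i|$ and $|H|$ divides $|G_1||G_2|$, so $|H|=|\pi_1(H)|\cdot|\pi_2(H)|$ and hence $H=\pi_1(H)\times\pi_2(H)$. This puts the subgroup lattice of $G$ in bijection with the product of the subgroup lattices of $G_1$ and $G_2$.

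Next I would verify that $\varphi$ itself is multiplicative on such products, i.e.\ $\varphi(H_1\times H_2)=\varphi(H_1)\varphi(H_2)$ whenever $\gcd(|H_1|,|H_2|)=1$. Two ingredients are needed: (a) $\exp(H_1\times H_2)=\exp(H_1)\exp(H_2)$, which follows from $\exp$ being the lcm of the exponents together with the coprimality of $\exp(H_i)\mid |H_i|$; and (b) for $(a_1,a_2)\in H_1\times H_2$ one has $o((a_1,a_2))=o(a_1)o(a_2)$ by the same coprimality. Consequently $o((a_1,a_2))=\exp(H_1\times H_2)$ holds iff $o(a_i)=\exp(H_i)$ for both $i$, and the count of such elements splits as a product.

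Putting the two facts together gives
\[
S(G_1\times G_2)=\sum_{H\leq G_1\times G_2}\varphi(H)=\sum_{H_1\leq G_1}\sum_{H_2\leq G_2}\varphi(H_1)\varphi(H_2)=S(G_1)S(G_2),
\]
and induction finishes the general case. I do not expect a serious obstacle here: the only step that requires any thought is the coprime-order subgroup decomposition, and this is a classical and short fact; the rest is a clean bookkeeping exercise using the coprimality of the element orders.
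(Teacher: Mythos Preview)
Your proof is correct and follows essentially the same route as the paper: decompose each subgroup of the direct product as a product of subgroups of the factors (using coprimality), use the multiplicativity of $\varphi$ on coprime direct products, and then factor the sum. The only cosmetic differences are that the paper handles general $k$ directly rather than reducing to $k=2$, and it cites the multiplicativity of $\varphi$ from \cite{11} instead of reproving it as you do.
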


\begin{proof}
Since the groups $(G_i)_{i=\ov{1,k}}$ are of coprime orders, we
infer that every subgroup $H$ of $G=\prod_{i=1}^k G_i$ can be
written as $H=\prod_{i=1}^k H_i$ with $H_i\leq G_i$,
$\forall\hspace{1mm}i=\ov{1,k}$. By Lemma 2.1 of \cite{11}, we
know that $\varphi$ is multiplicative and therefore
$$\varphi(H)=\prod_{i=1}^k \varphi(H_i)\,.$$Then one obtains
$$S(\prod_{i=1}^k G_i)=\dd\sum_{H\leq\, G}\varphi(H)=\dd\sum_{i=1}^k\dd\sum_{H_i\leq\, G_i}\varphi(H_1)\varphi(H_2)\cdots\varphi(H_k)=$$
$$\hspace{-10mm}=\prod_{i=1}^k\left(\,\dd\sum_{H_i\leq\,G_i}\varphi(H_i)\right)=\prod_{i=1}^k S(G_i)\,,$$as desired.
\end{proof}

In particular, Proposition 1 shows that the computation of $S(G)$
for a finite nilpotent group $G$ is reduced to $p$-groups.

\begin{corollary}
Let $G$ be a finite nilpotent group and $G_i$, $i=1,2,...,k$, be
the Sylow subgroups of $G$. Then
$$S(G)=\prod_{i=1}^k S(G_i).$$
\end{corollary}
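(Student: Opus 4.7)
The plan is to derive this corollary as an immediate consequence of Proposition~\ref{th:C1}, using the classical structure theorem for finite nilpotent groups.

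First, I would invoke the standard fact (see, e.g., \cite{4,9}) that a finite group $G$ is nilpotent if and only if it decomposes as the internal direct product of its Sylow subgroups, i.e.\ $G \cong \prod_{i=1}^k G_i$ where $G_i$ is the (unique) Sylow $p_i$-subgroup of $G$ for each prime divisor $p_i$ of $|G|$.

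Next, since the primes $p_1,\dots,p_k$ are distinct, the orders $|G_1|,\dots,|G_k|$ are pairwise coprime. Thus the hypothesis of Proposition~\ref{th:C1} is satisfied, and applying it yields
\[ S(G) = S\!\left(\prod_{i=1}^k G_i\right) = \prod_{i=1}^k S(G_i), \]
which is the claimed identity.

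There is essentially no obstacle here: the corollary is a direct specialization of Proposition~\ref{th:C1} once one recognizes that the Sylow decomposition of a nilpotent group provides exactly the coprime-order direct factorization required. The only point to be careful about is citing the nilpotent-iff-product-of-Sylows characterization, which is a well-known result and so needs no further elaboration.
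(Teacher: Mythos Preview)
Your argument is correct and follows exactly the same approach as the paper: the corollary is obtained immediately from Proposition~1 by invoking the fact that a finite nilpotent group is the direct product of its Sylow subgroups, whose orders are pairwise coprime. There is nothing to add.
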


\begin{proof}
The equality follows immediately from Proposition 1, since a
finite nilpotent group is the direct product of its Sylow
subgroups.
\end{proof}

Notice that for a finite abelian $p$-group $G$ the value
$\varphi(G)$ has been precisely computed in Theorem 2.3 of
\cite{11}. This is essential to show the following result.

\begin{theorem}\label{th:C1}
    Let $G$ be a finite abelian group. Then $S(G)\geq |G|$, and we have equality if and only if $G$ is cyclic.
\end{theorem}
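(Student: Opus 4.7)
The plan is to observe that inequality (3) from the paper already does almost all the work: its right-hand side actually equals $|G|$ for \emph{any} finite group, so the bound $S(G)\geq|G|$ is essentially free, and everything reduces to analyzing when the non-cyclic contribution to $S(G)$ vanishes.

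First, I would verify that $\sum_{H\in C(G)}\varphi(H)=|G|$ for every finite group $G$, not just abelian ones. The point is that for a cyclic subgroup $H$, one has $\varphi(H)=\varphi(|H|)$ (since $H\cong\mathbb{Z}_{|H|}$), and this counts precisely the generators of $H$. Each element $a\in G$ generates a unique cyclic subgroup $\langle a\rangle$ and is counted exactly once, as a generator of $\langle a\rangle$, in the sum. Combined with inequality (3), this gives
\[ S(G)=\sum_{H\in C(G)}\varphi(H)+\sum_{\substack{H\leq G\\ H\notin C(G)}}\varphi(H)=|G|+\sum_{\substack{H\leq G\\ H\notin C(G)}}\varphi(H)\geq|G|, \]
and equality holds if and only if $\varphi(K)=0$ for every non-cyclic subgroup $K\leq G$.

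Next I would invoke the classical fact that a finite abelian group always contains an element whose order equals $\exp(G)$ (the exponent of a finite abelian group coincides with the maximum element order, via the invariant factor decomposition). Hence $\varphi(G)\geq 1$ for every finite abelian group $G$.

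Combining these, if $G$ is abelian and non-cyclic, then $G$ itself is a non-cyclic subgroup with $\varphi(G)>0$, so the non-cyclic sum above is strictly positive and $S(G)>|G|$. Conversely, if $G$ is cyclic, all its subgroups are cyclic, the non-cyclic sum is empty, and $S(G)=|G|$ recovers the classical Gauss identity (1). The main potential obstacle is simply noticing that the right-hand side of (3) equals $|G|$ exactly; once one sees this, the problem collapses to the one-line observation that $\varphi(G)>0$ for non-cyclic abelian $G$, bypassing the more elaborate $p$-group computation via Theorem 2.3 of \cite{11} that the paper seems to anticipate.
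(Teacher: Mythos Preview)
Your proof is correct and takes a genuinely different, more elementary route than the paper's.

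The paper reduces to $p$-groups via the multiplicativity of $S$, then for an abelian $p$-group of rank $r$ it invokes the explicit formula for $\varphi(G)$ from \cite{11}, counts the maximal subgroups of each isomorphism type, and bounds $S(G)$ from below by $\varphi(G)$ plus the contributions from the maximal subgroups. This yields a quantitative inequality $S(G)\geq |G|\cdot\frac{p^r+p^2-p-1}{p^2}+1$, which exceeds $|G|$ once $r\geq 2$.

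Your argument instead exploits the identity $\sum_{H\in C(G)}\varphi(H)=|G|$, valid for \emph{every} finite group (each $a\in G$ is counted exactly once, as a generator of $\langle a\rangle$), so the inequality $S(G)\geq |G|$ follows immediately from~(3) with no structural analysis at all. The equality case then reduces to the single classical fact that a finite abelian group always has an element of order equal to its exponent, so $\varphi(G)\geq 1$; hence any non-cyclic abelian $G$ contributes a strictly positive term to the non-cyclic part of the sum.

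What each approach buys: yours is far shorter, avoids the $p$-group reduction and the citation to \cite{11}, and in fact proves the inequality half of Conjecture~6 for \emph{all} finite groups, not just nilpotent ones --- something the paper leaves open and works toward via the case analyses in Theorems~4 and~5. The paper's approach, on the other hand, produces explicit quantitative lower bounds on $S(G)$ that are often much sharper than $|G|$ (for instance the exact values $S(D_{2^n})$, $S(Q_{2^n})$, $S(S_{2^n})$ computed in Theorem~5), which your method does not directly give.
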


\begin{proof}
Remark first that we can assume $G$ to be a $p$-group, by
Corollary 2. Let $(p^{\alpha_1}, p^{\alpha_2}, \dots,
p^{\alpha_r})$ be the type of $G$ and assume that $\alpha_1 \leq
\alpha_2 \leq \dots \leq \alpha_{s-1}<\alpha_s=\alpha_{s+1}= \dots
=\alpha_r$. Then we have
$$\varphi(G)=|G|\left(1-\frac{1}{p^{\hspace{0,5mm}r-s+1}}\right)\geq |G|\left(1-\frac{1}{p}\right)\,.$$On the other hand, it is
well-known that $G$ has $\frac{p^r-1}{p-1}$ maximal subgroups,
namely $p^{r-1}$ subgroups isomorphic to
$M_1=\mathbb{Z}_{p^{\alpha_1-1}}\times\mathbb{Z}_{p^{\alpha_2}}\times\cdots\times\mathbb{Z}_{p^{\alpha_r}}$,
$p^{r-2}$ subgroups isomorphic to
$M_2=\mathbb{Z}_{p^{\alpha_1}}\times\mathbb{Z}_{p^{\alpha_2-1}}\times\cdots\times\mathbb{Z}_{p^{\alpha_r}}$,
... , and one subgroup isomorphic to
$M_r=\mathbb{Z}_{p^{\alpha_1}}\times\mathbb{Z}_{p^{\alpha_2}}\times\cdots\times\mathbb{Z}_{p^{\alpha_r-1}}$.
One obtains $$\hspace{-16mm}S(G)\geq\varphi(G)+\dd\sum_{i=1}^r
p^{r-i}\varphi(M_i)+1\geq$$
$$\hspace{21mm}\geq |G|\left(1-\frac{1}{p}\right)+\dd\sum_{i=1}^r
p^{r-i}\,\dd\frac{|G|}{p}\left(1-\frac{1}{p}\right)+1=$$
$$\hspace{-14mm}=|G|\dd\frac{p^r+p^2-p-1}{p^2}+1\,.$$If $r\geq 2$, then
$$\dd\frac{p^r+p^2-p-1}{p^2}\geq\dd\frac{2p^2-p-1}{p^2}>1,$$implying
that $$S(G)>|G|+1>|G|\,.$$Consequently, $G$ belongs to $\calc$ if
and only if $r=1$, i.e. if and only if it is cyclic.
\end{proof}

Next we will focus on extending the above result from abelian
$p$-groups to arbitrary $p$-groups, and consequently to arbitrary
nilpotent groups. By a direct calculation, we infer that for all
non-abelian $p$-groups $G$ of order $p^3$ (whose classification is
well-known -- see e.g. \cite{9}, II) we have
$$S(G)>|G|\,.$$This inequality also holds for
other classes of non-abelian $p$-groups $G$, determined by the
existence of abelian subgroups of a given structure.

\begin{theorem}\label{th:C1}
    Let $G$ be a non-abelian $p$-group of order $p^n$, $n\geq 4$. If $G$ has an abelian subgroup of order $p^m$ and rank $r$ with $m+r\geq
    n+2$, then $S(G)>|G|$, i.e. $G$ is not contained in $\calc$.
    In particular, if $G$ has an elementary abelian maximal
    subgroup, then it does not belong to $\calc$.
\end{theorem}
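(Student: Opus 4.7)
The plan is to reduce the non-abelian problem to the abelian case (Theorem 3) by restricting the defining sum of $S(G)$ to subgroups contained in the given abelian subgroup $A$ of order $p^m$ and rank $r$. Since every subgroup of $A$ is also a subgroup of $G$ and $\varphi$ is non-negative, one has
\[ S(G)=\sum_{H\leq G}\varphi(H)\ \geq\ \sum_{H\leq A}\varphi(H)=S(A), \]
so it is enough to prove the stronger inequality $S(A)>|G|=p^n$.

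To this end I would apply to $A$ the estimate obtained inside the proof of Theorem 3: writing the type of $A$ as $(p^{\alpha_1},\dots,p^{\alpha_r})$, counting the $(p^r-1)/(p-1)$ maximal subgroups of $A$ as in that proof, and using $\varphi(B)\geq |B|(1-1/p)$ for each abelian $p$-group $B$, one arrives at
\[ S(A)\geq |A|\cdot\frac{p^r+p^2-p-1}{p^2}+1. \]
Since $p\geq 2$ forces $p^2\geq p+1$, the fraction is at least $p^{r-2}$, so $S(A)\geq p^{m+r-2}+1$, and the hypothesis $m+r\geq n+2$ yields $S(A)\geq p^n+1>|G|$, as desired.

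For the ``in particular'' clause, I would simply note that an elementary abelian maximal subgroup $M\leq G$ satisfies $|M|=p^{n-1}$ and has rank $n-1$, so $m+r=2(n-1)\geq n+2$ is equivalent to $n\geq 4$, precisely the standing hypothesis; the first part then applies.

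The only point requiring a brief check, rather than a genuine obstacle, is that the estimate from Theorem 3 is informative, i.e.\ that $r\geq 2$ so that $A$ is non-cyclic. This is automatic: $G$ being non-abelian forces the proper inclusion $A<G$, hence $m\leq n-1$, and combined with $m+r\geq n+2$ this gives $r\geq 3$. Consequently the chosen abelian subgroup $A$ contributes enough to $S(G)$ by itself to exceed $|G|$, and the non-abelian structure of $G$ plays no further role in the argument.
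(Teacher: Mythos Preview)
Your proof is correct and follows essentially the same route as the paper: restrict $S(G)$ to the subgroups of the abelian subgroup $A$, invoke the lower bound $S(A)\geq |A|\,\dfrac{p^r+p^2-p-1}{p^2}+1$ from the proof of Theorem~3, and use $m+r\geq n+2$ to conclude $S(A)>p^n$. The only cosmetic difference is where the strict inequality enters: the paper starts from $S(G)>S(A)$ (since $A$ is proper in $G$) and ends with $\geq p^n$, while you start from $S(G)\geq S(A)$ and squeeze the strictness out of the ``$+1$'' to get $\geq p^n+1$. Your added remarks---handling the ``in particular'' clause explicitly and observing that $r\geq 3$---are not in the paper's proof but are helpful clarifications.
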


\begin{proof}
Let $A$ be an abelian subgroup of order $p^m$ and rank $r$ of $G$,
and assume that $m+r\geq n+2$. By the proof of Theorem 3, we infer
that $$\hspace{-12mm}S(G)> S(A)\geq
p^m\dd\frac{p^r+p^2-p-1}{p^2}+1=$$
$$=p^{m+r-2}+p^{m-2}\left(p^2-p-1\right)+1\geq$$
$$\hspace{-4,5mm}\geq p^{m+r-2}+p^{m-2}+1>p^{m+r-2}\geq$$
$$\hspace{-56mm}\geq p^n,$$as claimed.
\end{proof}

\begin{theorem}\label{th:C1}
    Let $G$ be a non-abelian $p$-group of order $p^n$, $n\geq 4$. If $G$ has a cyclic maximal subgroup,
    then $S(G)>|G|$, i.e. $G$ is not contained in $\calc$.
\end{theorem}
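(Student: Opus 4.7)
The plan is to recast inequality (3) in a form that makes clear precisely what is needed to force $S(G)>|G|$. Splitting the defining sum according to whether each subgroup is cyclic and using the classical Gauss identity $\sum_{H\in C(G)}\varphi(|H|)=|G|$ (each $x\in G$ generates a unique cyclic subgroup, of which it is one of $\varphi(|\langle x\rangle|)$ generators), we obtain
$$S(G)-|G|=\sum_{\substack{H\leq G\\ H\text{ non-cyclic}}}\varphi(H).$$
Thus it suffices to exhibit a single non-cyclic subgroup $H\leq G$ with $\varphi(H)\geq 1$.

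The natural witness is $H=G$ itself. Since $G$ is non-abelian it is non-cyclic, so it qualifies. Let $C$ be the assumed cyclic maximal subgroup; then $|C|=p^{n-1}$ and any generator $a$ of $C$ satisfies $o(a)=p^{n-1}$, so $\exp(G)\geq p^{n-1}$. Because $G$ is not cyclic, $\exp(G)\neq p^n=|G|$, forcing $\exp(G)=p^{n-1}$. Consequently each of the $\varphi(p^{n-1})=p^{n-2}(p-1)$ generators of $C$ is an element of $G$ of order $\exp(G)$, whence $\varphi(G)\geq p^{n-2}(p-1)\geq 1$ (using $n\geq 4$ and $p\geq 2$). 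Combining with the identity above yields
$$S(G)\geq |G|+\varphi(G)\geq p^n+p^{n-2}(p-1)>|G|,$$
so $G\notin\calc$.

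No serious obstacle arises once the reduction to ``produce a non-cyclic subgroup whose exponent is realised by some element'' is in place; the hypothesis of a cyclic maximal subgroup supplies the witness at once, namely $G$ itself, with the generator of $C$ realising $\exp(G)=p^{n-1}$. In particular one need not invoke the full classification of non-abelian $p$-groups with a cyclic maximal subgroup (the modular group $M_{p^n}$ for $p$ odd, and $D_{2^n}$, $Q_{2^n}$, $SD_{2^n}$, $M_{2^n}$ for $p=2$ and $n\geq 4$); the uniform argument dispatches all cases simultaneously.
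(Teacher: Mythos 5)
Your proof is correct, and it takes a genuinely different and far more elementary route than the paper's. The paper proves this theorem by invoking the classification of $p$-groups with a cyclic maximal subgroup (Theorem 4.1 of Suzuki): it derives the inclusion--exclusion formula $S(G)=\varphi(G)+\sum_{i=1}^{p+1}S(M_i)-p\cdot S(\Phi(G))$ from the fact that the $p+1$ maximal subgroups pairwise meet in the cyclic Frattini subgroup of order $p^{n-2}$, and then treats $M(p^n)$, $D_{2^n}$, $Q_{2^n}$ and $S_{2^n}$ one by one via recurrence relations, obtaining exact values such as $S(D_{2^n})=2^{n+1}+(n-3)\cdot 2^{n-2}$ and $S(Q_{2^n})=(n+4)\cdot 2^{n-2}$; those closed forms are not wasted effort, since they are reused in the Remark following Theorem 7. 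You instead upgrade the paper's inequality (3) to the exact identity $S(G)=|G|+\sum_{H}\varphi(H)$, where $H$ runs over the non-cyclic subgroups of $G$ --- valid for every finite group, since partitioning $G$ according to the cyclic subgroup each element generates gives $\sum_{H\in C(G)}\varphi(|H|)=|G|$ --- after which a single non-cyclic subgroup attaining its exponent finishes the proof; $H=G$ works, the cyclic maximal subgroup forcing $\exp(G)=p^{n-1}$ and $\varphi(G)\geq p^{n-2}(p-1)$. Every step checks out. Indeed, your argument proves strictly more than the statement: since any finite $p$-group (and, taking a product of elements of maximal order in each Sylow subgroup, any finite nilpotent group) contains an element whose order equals its exponent, every non-cyclic nilpotent $G$ satisfies $\varphi(G)\geq 1$ and hence $S(G)>|G|$. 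So your identity yields at once Theorem 3, Theorem 4, the present theorem (without needing $n\geq 4$, and with the cyclic maximal subgroup serving only to sharpen the lower bound on $\varphi(G)$), and an affirmative resolution of the paper's Conjecture 6; it also shows $S(G)\geq |G|$ for \emph{all} finite groups, with equality exactly when no non-cyclic subgroup attains its exponent --- a reformulation of membership in $\calc$ which, being inherited by subgroups, immediately gives the closure of $\calc$ under subgroups asked about in Problem 3. What the paper's approach buys that yours does not is the explicit computation of $S$ on these families; what yours buys is uniformity, generality, and the elimination of the classification.
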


\begin{proof}
By Theorem 4.1 of \cite{9}, II, we know that $G$ is isomorphic to
\begin{itemize}\item[--] $M(p^n)=\langle x,y\mid
x^{p^{n-1}}=y^p=1,\ y^{-1}x y=x^{p^{n-2}+1}\rangle$\end{itemize}
when $p$ is odd, or to one of the following groups
\begin{itemize}\item[--] $M(2^n)$
\item[--] the dihedral group $D_{2^n}$,
\item[--] the generalized quaternion group
$$Q_{2^n}=\langle x,y\mid x^{2^{n-1}}=y^4=1,\ yxy^{-1}=x^{2^{n-1}-1}\rangle,$$
\item[--] the quasi-dihedral group
$$S_{2^n}=\langle x,y\mid x^{2^{n-1}}=y^2=1,\ y^{-1}xy=x^{2^{n-2}-1}\rangle$$\end{itemize} when $p=2$.

A common property of all these $p$-groups $G$ is that they have
$p+1$ maximal subgroups, say $M_1$, $M_2$, ... , $M_{p+1}$, and
(at least) one of them is cyclic, say
$M_{p+1}\cong\mathbb{Z}_{p^{n-1}}$. Moreover, $\Phi(G)$ is cyclic
of order $p^{n-2}$. Then, by applying the Inclusion-Exclusion
Principle, one obtains
$$S(G)=\varphi(G)+\dd\sum_{i=1}^{p+1}S(M_i)-p\cdot S(\Phi(G))=\varphi(G)+\dd\sum_{i=1}^{p+1}S(M_i)-p^{n-1}.\0(4)$$

For $M(p^n)$ it is easy to check that $p$ maximal subgroups are
cyclic, say $M_i\cong \mathbb{Z}_{p^{n-1}}$, $i=2,3, ... , p+1$,
and $M_1\cong \mathbb{Z}_p\times\mathbb{Z}_{p^{n-2}}$. Then
$\varphi(M(p^n))=p\cdot\varphi(p^{n-1})=p^n-p^{n-1}$ and (4) leads
to
$$\hspace{-10mm}S(M(p^n))=p^n-p^{n-1}+S(\mathbb{Z}_p\times\mathbb{Z}_{p^{n-2}})+p\cdot p^{n-1}-p^{n-1}>$$
$$\hspace{17mm}>p^n-p^{n-1}+p^{n-1}+p^n-p^{n-1}=2\cdot
p^n-p^{n-1}>p^n,$$according to Theorem 3.

For every $G\in\{D_{2^n}, Q_{2^n}, S_{2^n}\}$ we have
$\varphi(G)=2^{n-2}$. Then (4) can be rewritten as
$$S(G)=2^{n-2}+S(M_1)+S(M_2).\0(5)$$The pair $(M_1,M_2)$ of maximal subgroups of $D_{2^n}$,
$Q_{2^n}$ and $S_{2^n}$ is $(D_{2^{n-1}},D_{2^{n-1}})$,
$(Q_{2^{n-1}},Q_{2^{n-1}})$ and $(D_{2^{n-1}},Q_{2^{n-1}})$,
respectively. Clearly, in the first two cases (5) becomes a
recurrence relation which easily leads to
$$S(D_{2^n})=2^{n+1}+(n-3)\cdot2^{n-2}>2^n$$and
$$S(Q_{2^n})=(n+4)\cdot2^{n-2}>2^n,$$while for $G=S_{2^n}$ one
obtains
$$S(S_{2^n})=2^{n-2}+S(D_{2^{n-1}})+S(Q_{2^{n-1}})=(2n+9)\cdot2^{n-3}>2^n.$$This
completes the proof.
\end{proof}

Inspired by the previous results, we came up with the following
conjecture.

\begin{conjecture}\label{th:C1}
    Let $G$ be a finite nilpotent group. Then $S(G)\geq |G|$, and we have equality if and only if $G$ is cyclic.
\end{conjecture}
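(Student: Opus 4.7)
The strategy I would take is to split $S(G)$ according to whether the subgroup is cyclic, observe that the cyclic part automatically sums to $|G|$, and then show that under nilpotency the non-cyclic part is strictly positive unless $G$ itself is cyclic. The first step is the identity
$$\sum_{H \in C(G)} \varphi(H) = |G|,$$
valid for \emph{any} finite group $G$: each $g \in G$ generates a unique cyclic subgroup $\langle g\rangle$, and for $H$ cyclic $\varphi(H) = \varphi(|H|)$ equals the number of generators of $H$, so summing by double-counting recovers $|G|$. This upgrades the right-hand side of inequality~(3) to an equality.

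The second step is to show that $\varphi(H) \geq 1$ for every subgroup $H$ of a nilpotent group. Since subgroups of nilpotent groups are nilpotent, it suffices to produce, for nilpotent $H$, an element of order $\exp(H)$. Writing $H = \prod_p H_p$ as a direct product of its Sylow subgroups, one has $\exp(H) = \prod_p \exp(H_p)$, and the claim reduces to the $p$-group case: in a $p$-group every element order is a power of $p$, so $\exp(H_p)$ is simply the maximum attained order and is realised by some element; combining such elements over all primes yields the desired element of order $\exp(H)$.

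Combining both steps, for nilpotent $G$ one has
$$S(G) = \sum_{H \in C(G)} \varphi(H) + \sum_{\substack{H \leq G \\ H \notin C(G)}} \varphi(H) \geq |G| + N(G),$$
where $N(G)$ is the number of non-cyclic subgroups of $G$. Hence $S(G) \geq |G|$, and equality forces $N(G) = 0$; since $G \leq G$ is itself one of the subgroups counted by $N$, this makes $G$ cyclic. The converse is Gauss's formula~(1).

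I would expect the substantive step to be the second one, as it is the only place where nilpotency enters essentially. Without it, the inequality $\varphi(H) \geq 1$ can genuinely fail (for instance $\varphi(S_3) = 0$), which is precisely the escape hatch that permits the non-nilpotent members of $\calc$ announced in the introduction.
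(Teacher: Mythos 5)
Your argument is correct --- and you should be aware that it does more than match the paper: the paper contains \emph{no} proof of this statement. It is posed as Conjecture 6 and explicitly left open (Problem 1), supported only by partial evidence: Theorem 3 (the abelian case, proved by reducing to $p$-groups and counting the $\frac{p^r-1}{p-1}$ maximal subgroups with the explicit formula for $\varphi$ of an abelian $p$-group from \cite{11}) and Theorems 4 and 5 (special families of non-abelian $p$-groups, the latter via the recurrence $S(G)=\varphi(G)+\sum_i S(M_i)-p\cdot S(\Phi(G))$). Your two steps both check out. The identity $\sum_{H\in C(G)}\varphi(H)=|G|$ holds for every finite group, since the generator sets of the cyclic subgroups partition $G$ and $\varphi(H)=\varphi(|H|)$ for cyclic $H$; note this already gives $S(G)\geq|G|$ for \emph{all} finite groups, subsuming the inequality half of the paper's Theorem 3 by a far lighter argument (the paper states inequality (3) but never observes that its right-hand side is exactly $|G|$). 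Your second step --- every nilpotent group attains its exponent, via the Sylow decomposition and the fact that commuting elements of pairwise coprime orders have product of order equal to the product of the orders --- is standard and correctly isolates where nilpotency is indispensable: it forces $\varphi(H)\geq 1$ for every subgroup $H$, so $S(G)=|G|$ forces every subgroup, in particular $G$ itself, to be cyclic, while the converse is Gauss's formula. Your closing diagnosis is also exactly right: the non-nilpotent members of $\calc$ exhibited in the paper (the non-abelian group of order $pq$, the groups $D_{2n}$ with $n$ odd, and the semidirect products of Theorem 8) are precisely groups whose non-cyclic subgroups $H$ all satisfy $\varphi(H)=0$ --- indeed your decomposition shows $G\in\calc$ if and only if this vanishing holds, a characterization that bears directly on the paper's Problems 2 and 3.
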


Obviously, Conjecture 6 can be reformulated in the next way:
\textit{the cyclic groups are the unique finite nilpotent groups
contained in $\calc$}. It leads to the natural assumption that
$\calc$ consists in fact only of the finite cyclic groups. This is
not true, as shows the following elementary example.

\bigskip\noindent{\bf Example.} Let $G$ be the non-abelian group
of order $pq$, where $p<q$ are primes and $p\mid q-1$. The
subgroup structure of $G$ is well-known: it possesses one subgroup
of order 1, $q$ subgroups of order $p$, one subgroup of order $q$
and one subgroup of order $pq$. Then
$$S(G)=1+q\,\varphi(\mathbb{Z}_p)+\varphi(\mathbb{Z}_q)+\varphi(G)=1+q(p-1)+q-1=pq=|G|\,,$$i.e. $G$ belongs to $\calc$.
\bigskip

In particular, the above example shows that the dihedral group
$D_6$ is contained in $\calc$. In fact we are able to characterize
the containment to $\calc$ for arbitrary dihedral groups $D_{2n}=
\langle x,y\mid x^n=y^2=1,\ yxy=x^{-1} \rangle$, $n\geq 2$.

\begin{theorem}\label{th:C1}
    The dihedral group $D_{2n}$ belongs to $\calc$ if and only if $n$ is odd.
\end{theorem}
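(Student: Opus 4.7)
The plan is to exploit the well-known subgroup lattice of $D_{2n}$: for every divisor $d$ of $n$, $D_{2n}$ has exactly one cyclic subgroup of order $d$ (sitting inside the rotation subgroup $\langle x\rangle$) and exactly $n/d$ subgroups isomorphic to $D_{2d}$. Splitting $S(D_{2n})$ according to whether a subgroup is cyclic or dihedral, and applying the classical Gauss formula to the cyclic part, I would obtain
\[ S(D_{2n}) = n + \sum_{d \mid n} \frac{n}{d}\, \varphi(D_{2d}). \]

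Next I would compute $\varphi(D_{2d})$ in closed form by looking at $\exp(D_{2d}) = \lcm(2,d)$ and counting the elements of that order among the $d$ rotations and the $d$ reflections. Four sub-cases cover everything: $\varphi(D_2)=1$ trivially; $\varphi(D_4)=3$, since the exponent is $2$ and every non-identity element qualifies; for odd $d\geq 3$ the exponent equals $2d$ but no rotation reaches order $2d$ and every reflection has order $2$, so $\varphi(D_{2d})=0$; for even $d\geq 4$ the exponent equals $d$ and only the $\varphi(d)$ generators of $\langle x\rangle$ attain it, hence $\varphi(D_{2d})=\varphi(d)$.

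The dichotomy then falls out quickly. If $n$ is odd, every divisor $d\mid n$ is odd, so only the term $d=1$ survives in the sum and contributes $n\cdot \varphi(D_2)=n$; therefore $S(D_{2n}) = 2n = |D_{2n}|$, placing $D_{2n}$ in $\calc$. If $n$ is even, the two terms $d=1$ and $d=2$ alone already contribute $n\cdot 1 + (n/2)\cdot 3 = 5n/2$, and all remaining terms in the sum are non-negative, giving $S(D_{2n}) \geq n + 5n/2 > 2n$, so $D_{2n}\notin\calc$. I do not foresee a real obstacle here; the only care needed is to isolate the exceptional cases $d=1,2$, where the reflections themselves attain the exponent and the generic formula $\varphi(D_{2d})=\varphi(d)$ for even $d$ breaks down.
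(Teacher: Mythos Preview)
Your proposal is correct and follows essentially the same route as the paper: both use the standard subgroup parametrization of $D_{2n}$, reduce via Gauss to $S(D_{2n})=n+\sum_{d\mid n}(n/d)\,\varphi(D_{2d})$, and invoke the explicit values $\varphi(D_2)=1$, $\varphi(D_4)=3$, $\varphi(D_{2d})=0$ for odd $d\ge 3$, and $\varphi(D_{2d})=\varphi(d)$ for even $d\ge 4$. The only cosmetic difference is that the paper organizes the even-$n$ case via a $2$-adic split $n=2^km$ of the divisors to write $S(D_{2n})=2n+\Sigma$ with $\Sigma\ge 0$ vanishing iff $k=0$, whereas you dispatch it directly by the lower bound from $d=1,2$ alone; incidentally, the paper's stated value $\varphi(D_4)=4$ is a typo---your value $3$ is correct, and is what the paper itself uses in its subsequent Remark.
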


\begin{proof}
Let $n=2^k m$ with $k,m\in\mathbb{N}$ and $m$ odd. Then the
lattice of divisors of $n$ can be written as the union of the sets
$\cald_i=\{\,2^i m'\hspace{1mm}\mid\hspace{1mm} m'\!\mid\! m\}$,
$i=0,1,...,k$. On the other hand, for every divisor $d$ of $n$,
$D_{2n}$ has one subgroup isomorphic to $\mathbb{Z}_d$, namely
$\langle x^{\frac{n}{d}}\rangle$, and $\frac{n}{d}$ subgroups
isomorphic to $D_{2d}$, namely $\langle
x^{\frac{n}{d}},x^{i-1}y\rangle$, $i=1,2,...,\frac{n}{d}$\,.
Recall that we have $\varphi(D_2)=1$, $\varphi(D_4)=4$, and
$$\varphi(D_{2n})=\left\{\begin{array}{lll}
        0,&n \equiv 1 \hspace{1mm}({\rm mod}\hspace{1mm} 2)\\
        &&\hspace{1mm}\forall\hspace{1mm} n\geq 3\\
        \varphi(n),&n \equiv 0 \hspace{1mm}({\rm mod}\hspace{1mm} 2)\end{array}\right.$$by Theorem
2.6 of \cite{11}. It follows that
$$\hspace{-25mm}S(D_{2n})=\dd\sum_{H\leq D_{2n}}\varphi(H)=\dd\sum_{d\mid
n}\left(\varphi(\mathbb{Z}_d)+\dd\frac{n}{d}\,\varphi(D_{2d})\right)=$$
$$\hspace{20mm}=\dd\sum_{d\mid n}\varphi(\mathbb{Z}_d)+\dd\sum_{d\mid
n}\dd\frac{n}{d}\,\varphi(D_{2d})=\dd\sum_{d\mid
n}\varphi(d)+\dd\sum_{i=0}^k\sum_{d\in\cald_i}\dd\frac{n}{d}\,\varphi(D_{2d})=$$
$$\hspace{3mm}=n+\dd\sum_{m'\mid\, m}\dd\frac{n}{m'}\,\varphi(D_{2m'})+\dd\sum_{i=1}^k\sum_{m'\mid\, m}\dd\frac{n}{2^i
m'}\,\varphi(D_{2^{i+1}m'})=$$
$$\hspace{-72,5mm}=2n+\Sigma,$$where
$$\Sigma=\dd\sum_{i=1}^k\sum_{m'\mid\, m}\dd\frac{n}{2^i
m'}\,\varphi(D_{2^{i+1}m'}).$$Hence $S(D_{2n})=2n$ if and only if
$\Sigma=0$. This happens if and only if $k=0$, i.e. $n$ is odd.
\end{proof}

\noindent{\bf Remark.} By Theorem 7, we have $S(D_{2n})=2n$ for
$n$ odd. An explicit value of $S(D_{2n})$ for $n$ even can be
calculated, too. Let $n$ as above and let
$m=p_1^{\alpha_1}p_2^{\alpha_2}\cdots p_s^{\alpha_s}$ be the
decomposition of $m$ as a product of prime factors. We remark that
$\varphi(D_{2^{i+1}m'})=\varphi(2^i m')=2^{i-1}\varphi(m')$,
excepting the case $i=m'=1$ when $\varphi(D_{2^{i+1}m'})=3$. One
obtains
$$\hspace{10mm}\Sigma=\dd\frac{3n}{2}-\dd\frac{n}{2}+\dd\sum_{i=1}^k\sum_{m'\mid\, m}\dd\frac{n}{2m'}\,\varphi(m')=n+\dd\frac{kn}{2}\sum_{m'\mid\, m}\dd\frac{\varphi(m')}{m'}=$$
$$\hspace{-35mm}=n+\dd\frac{kn}{2}\prod_{i=1}^s\left(\alpha_i+1-\frac{\alpha_i}{p_i}\right)$$and
thus
$$S(D_{2n})=3n+\dd\frac{kn}{2}\prod_{i=1}^s\left(\alpha_i+1-\frac{\alpha_i}{p_i}\right).$$For
example, we can easily check that $$S(D_{12})=23.$$

Next we observe that both the non-abelian groups of order $pq$ and
the dihedral groups $D_{2n}$ with $n$ odd, which we verified to be
contained in $\calc$, are semidirect products of a cyclic normal
subgroup $N$ by a cyclic subgroup $H$ of prime order satisfying
$C_N(H)=1$. The containment of such a group to $\calc$ can be also
characterized, extending the above results.

\begin{theorem}\label{th:C1}
    Let $G$ be a finite non-abelian group and $N\cong\mathbb{Z}_n$ be a normal Hall subgroup of $G$ which has a complement $H$ of prime order $p$ such that $C_N(H)=1$.
    Then $G$ belongs to $\calc$ if and only if the number of complements of $N$ in $G$ is $n$.
\end{theorem}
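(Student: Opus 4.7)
The strategy is to enumerate the subgroups of $G$ using the Hall decomposition $G=N\rtimes H$, compute $\varphi$ on each, and sum.

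\emph{Subgroup structure.} Since $N$ is a normal Hall subgroup with $G/N$ of prime order $p$, for any $K\leq G$ the intersection $L:=K\cap N$ is a subgroup of $N\cong\mathbb{Z}_n$, hence cyclic of some order $d\mid n$, and $|K|\in\{d,pd\}$. In the second case, Schur--Zassenhaus applied to $K$ gives $K=LH'$ with $H'\leq K$ of order $p$; such an $H'$ is then a Sylow $p$-subgroup of $G$ and so a complement of $N$ in $G$. Conversely, every subgroup of $N$ is characteristic in $N$ and thus normalised by $G$, so for each $L\leq N$ and each complement $H'$ the product $LH'$ is a subgroup with $(LH')\cap N=L$ and order $p|L|$.

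\emph{Computing $\varphi$.} Any complement $H'$ is a $G$-conjugate of $H$, so $C_N(H')=1$; in particular $C_L(H')\leq C_N(H')=1$ for every $L\leq N$. Therefore, for $K=LH'$ with $L=K\cap N$ of order $d$, the action of $H'$ by conjugation on $L$ is fixed-point-free, so $K$ is a Frobenius group with kernel $L$ and complement $H'$. By the standard description of Frobenius groups, every element of $K\setminus L$ lies in a $K$-conjugate of $H'$ and hence has order $p$, while every element of $L$ has order dividing $d$. Since $\gcd(d,p)=1$, this gives $\exp(K)=pd$. When $d=1$, $K\cong\mathbb{Z}_p$ and $\varphi(K)=p-1$. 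When $d>1$ no element of $K$ attains the order $pd$, so $\varphi(K)=0$. For $K\leq N$ one simply has $\varphi(K)=\varphi(|K|)$.

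\emph{Summation.} The subgroups of $N$ contribute $\sum_{d\mid n}\varphi(d)=n$ by the classical Gauss formula. The $c$ complements of $N$ (the subgroups with $d=1$ in the second family) contribute $c(p-1)$. All remaining subgroups --- those with $d>1$, including $G$ itself, since $G$ is non-abelian forces $n>1$ --- contribute $0$. Hence
\[ S(G) \;=\; n + c(p-1), \]
and $S(G)=|G|=pn$ is equivalent to $c(p-1)=(p-1)n$, i.e.\ to $c=n$, which is exactly the claim.

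The main technical point is showing $\varphi(K)=0$ for the intermediate subgroups $K=LH'$ with $1<d\le n$: this requires first promoting the hypothesis $C_N(H)=1$ to every Sylow $p$-subgroup of $G$ via conjugacy, then recognising $K$ as a Frobenius group in order to conclude that elements outside $L$ all have order $p$ and therefore cannot reach $\exp(K)=pd$.
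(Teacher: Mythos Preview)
Your proof is correct and follows essentially the same route as the paper: decompose $L(G)$ into the subgroups of $N$, the complements of $N$, and the products $LH'$ with $|L|>1$, show that $\varphi$ vanishes on the last family, and sum. The only difference is in the justification of $\varphi(LH')=0$: the paper simply observes that $C_N(H')=1$ forces $LH'$ to be non-cyclic (hence to contain no element of order $|LH'|=pd=\exp(LH')$), whereas you invoke the Frobenius structure to pin down the element orders in $LH'$---correct, but heavier than necessary.
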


\begin{proof}
Under our hypotheses, $L(G)$ consists of the subgroups of $N$, say
$N_d$ with $d=|N_d|$, $d\mid n$, of the complements of $N$ in $G$,
say $H_1=H$, $H_2$, ... , $H_{n_p}$, and of the semidirect
products $N_dH_i$, with $d\mid n$, $d\neq 1$ and $i=\ov{1,n_p}$.
Since $C_N(H)=1$, every $N_dH_i$ with $d\neq 1$ is not cyclic and
so it does not contain elements of order $dp=\exp(N_dH_i)$.
Consequently, we infer that $\varphi(N_dH_i)=0$ for all $d\mid n$
with $d\neq 1$ and all $i=\ov{1,n_p}$. This leads to
$$S(G)=S(N)+\dd\sum_{i=1}^{n_p}\varphi(H_i)=n+n_p(p-1).$$It is now obvious that
$$S(G)=np \Longleftrightarrow n_p=n,$$which ends the proof.
\end{proof}

We conclude that at least two important classes of finite groups
are contained in $\calc$: cyclic groups and semidirect products of
type indicated in Theorem 8. Remark that these groups $G$ are
supersolvable and that $S(G)$ equals the sum of all values of
$\varphi$ on the cyclic subgroups of $G$, that is (3) becomes an
equality.

Finally, we remark that every subgroup and every quotient of such
a group also belong to $\calc$, that is $\calc$ seems to be closed
under subgroups and homomorphic images.
\bigskip

We end this paper by indicating several natural problems on the
above class $\calc$.

\bigskip\noindent{\bf Problem 1.} Prove or disprove Conjecture 6.

\bigskip\noindent{\bf Problem 2.} Give a complete description of $\calc$ (in our opinion, it consists of the finite cyclic groups
and of non-abelian semidirect products of a certain type, most
probably metacyclic groups). It is true that $\calc$ is contained
in the class of finite supersolvable groups?

\bigskip\noindent{\bf Problem 3.} Study whether $\calc$ is closed
under subgroups and homomorphic images.

\vspace*{5ex}\small

\hfill
\begin{minipage}[t]{5cm}
Marius T\u arn\u auceanu \\
Faculty of  Mathematics \\
``Al.I. Cuza'' University \\
Ia\c si, Romania \\
e-mail: {\tt tarnauc@uaic.ro}
\end{minipage}

\end{document}